\documentclass[12pt]{article}
% Packages
\usepackage{amsmath}
\usepackage{amsfonts}
\usepackage{amssymb}
\usepackage{amsthm}
\usepackage{xspace}
\usepackage{geometry}
\geometry{hmargin = 1 in, vmargin = 1 in}

%Theorems
\theoremstyle{definition}
\newtheorem{lem}{Lemma}

\newtheorem{prop}{Proposition}
\newtheorem{thm}{Theorem}
 % thm w/o number
\newtheorem*{remark}{Remark}

%%%%%%%%%%%%%%% commands %%%%%%%%%%%%%%%%%%%%%%%%%%%%%%

%Lowercase Greek letters
\newcommand{\ep}{\varepsilon}

\newcommand{\be}{\beta}

\newcommand{\del}{\delta}

\newcommand{\lam}{\lambda}
%Uppercase Greek letters

%invariant intervals notation

\newcommand{\lt}{L}
\newcommand{\bp}{b}
%Norms
\newcommand{\abs}[1]{\left\vert#1\right\vert}
\newcommand{\Lone}[1]{\left\vert#1\right\vert_{L^1}}
\newcommand{\infnorm}[1]{\left\vert#1\right\vert_{L^\infty}}
\newcommand{\norm}[1]{\left\Vert#1\right\Vert}
\newcommand{\var}[1]{\text{var}(#1)}

\newcommand{\bv}{\text{BV}}
%waiting times
\newcommand{\ts}{t}
\newcommand{\T}{\mathcal{T}}

%Projectors
\newcommand{\brA}{{\bar A}}
\newcommand{\brn}{{\bar n}}

\newcommand{\cB}{\mathcal{B}}
\newcommand{\cP}{\mathcal{P}}
\newcommand{\cQ}{\mathcal{Q}}

%Other commands
\newcommand\Prob{{\mathbb{P}}}
\newcommand{\bA}{\mathbf{A}}
\newcommand{\bD}{\mathbf{D}}
\newcommand{\pr}{\mathbb{P}^r}
\renewcommand{\L}{\mathcal{L}}
\newcommand{\C}{\mathcal{C}}

\newcommand{\mc}[1]{\mathcal{#1}}
\newcommand{\acim}{ACIM\xspace}
\newcommand{\acims}{ACIMs\xspace}
\newcommand{\leb}{\mathrm{Leb}}

%%%%%%%%%%%%%% Figure compiling commands -- use only 1 %%%%%%%%%%%%%%%%%%%%%%%%%%%%%%%
\usepackage[pdftex]{graphicx,color}  % to compile with figures; must use pdflatex

%\newcommand{\comment}[1]{} % to compile without figures

%opening

\title%[Expanding Maps of the Interval with Metastable States]
{The Diffusion Coefficient for Piecewise Expanding Maps of the Interval with Metastable States}

%%%%%%%%%%%%%% Figure compiling commands -- use only 1 %%%%%%%%%%%%%%%%%%%%%%%%%%%%%%%
%\usepackage[pdftex]{graphicx,color} \newcommand{\comment}[1]{#1} % to compile with figures; must use pdflatex

%\newcommand{\comment}[1]{} % to compile without figures

%\renewcommand{\baselinestretch}{2.0} %2=double space

\begin{document}

%\Large

\title{THE DIFFUSION COEFFICIENT FOR PIECEWISE EXPANDING MAPS OF THE INTERVAL WITH METASTABLE
STATES}

\author{Dmitry Dolgopyat\thanks{Department of Mathematics, University of Maryland, College Park, MD 20742.  Email:  dmitry@math.umd.edu.  DD is partially supported by the NSF. } \quad and \quad Paul Wright\thanks{Department of Mathematics, University of Maryland, College Park, MD 20742.  Email:  paulrite@math.umd.edu.  PW is partially supported by an NSF Mathematical Sciences Postdoctoral Research Fellowship.}}

\maketitle

\begin{abstract}
Consider a piecewise smooth expanding map of the interval possessing
several invariant subintervals and the same number of
ergodic absolutely continuous invariant probability measures (ACIMs).
After this system is perturbed to make the subintervals lose their invariance in such a way that
there is a unique ACIM, we show how to approximate the diffusion coefficient for
an observable of bounded variation by the diffusion coefficient of a related continuous time Markov chain.
\vskip .5cm
\noindent \textbf{Key words:} Expanding maps, absolutely continuous invariant measure, transfer operator, metastable states,
slow dynamics.
\vskip .5cm
\noindent \textbf{AMS Subject Classification:} 37D50, 60J28
\end{abstract}

\emph{Dedicated to Manfred Denker on the occasion  of his 60th birthday.}\footnote{We thank Carlangelo Liverani for useful comments on a preliminary version
of this paper.}

\maketitle

\section{Introduction}\label{S:intro}
Metastable dynamics arise in a number of physical systems. In such systems, the phase space can be divided into a finite number of components, called metastable states, that are nearly invariant under the dynamics.  A typical trajectory will remain in one metastable state for an extended period of time before escaping to another metastable state and repeating this behavior.
%Systems exhibiting such behavior include molecular \cite{MeerbachEtAl} and oceanic \cite{FroylandPadbergEtAl}
%dynamics. Quantitative aspects of these phenomena have been studied through eigenvalue and
%eigenvector approximation techniques for Markov models \cite{MeerbachSchutteFischer}.
%, FroylandPadberg}.
Rigourous results about metastability in dynamical systems perturbed by noise can
be found in \cite{FreidlinWentzell} and \cite {KellerLiverani09}.

In this paper we are concerned with reducing the description of purely deterministic chaotic systems to cooresponding finite state Markov chains. In particular, we continue the study of the dynamics of hyperbolic interval maps with metastable states initiated in \cite{GonzalezTokmanHuntWright}.
These systems arise from perturbing an initial system
$T_0$ with $m$ disjoint invariant intervals $I_1, I_2, \dots, I_m.$
The initial map has $m$ mutually singular ergodic absolutely continuous invariant measures (\acims),
$\mu_1, \mu_2, \dots, \mu_m.$
$T_0$ is perturbed in such a way that the $\mu_j$
lose their invariance, and the perturbed map $T_\ep$ has only one \acim, $\mu_{\ep}$.
See Figure~\ref{figure:2xmod1}.

%\comment{
%\begin{figure}[htbp]
%\begin{center}
%\label{figure:2xmod1}
%\resizebox{8cm}{!}
%{\input metastableMapCapsB.pdf_t}
%\caption{Dashed: initial system $T_0$. Thick: metastable system $T_\ep$.}
%\end{center}
%\end{figure}
%}

\begin{figure}[htbp]
\begin{center}
\label{figure:2xmod1}
\resizebox{7cm}{!}{\includegraphics{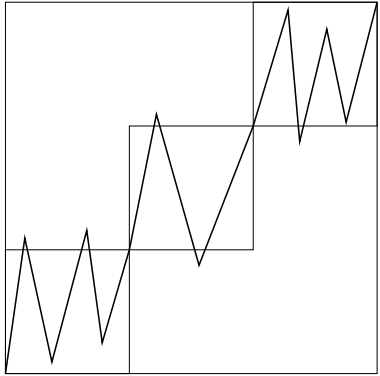}}
    \caption{A map with three almost invariant intervals}
\end{center}
\end{figure}

Such metastable systems can be understood in the context of deterministic dynamical systems with holes
(see \cite{DemersYoung}) as follows.
As the invariance of initially invariant intervals is destroyed
by the perturbation, we think of the small set of points
$I_i\cap T_\ep^{-1}I_j$ that switch from $I_j$ to $I_i$ after the application of $T_\ep$,
as being holes in the initially invariant sets. Therefore the techniques developed to study systems with holes
are useful in our analysis.

As was shown in \cite{GonzalezTokmanHuntWright}, we are able to approximate $\mu_\ep$,
for small $\ep$, by a convex combination $\sum_{j=1}^m p_j \mu_j$
of the initially invariant measures,
where $(p_1, p_2, \dots, p_m)$
is the invariant measure
for the continuous time Markov chain on $m$ states with transition rates
proportional to the asymptotic sizes of the holes. Intuitively, since our map is chaotic on each interval,
we expect the transition times between the $I_j$'s to be almost independent, which explains
the appearence of the above mentioned Markov chain.
In this paper, we show that the Markov approximation also extends to the diffusion matrix for smooth observables.
We believe the methods developed in our paper can be used to describe the transport coefficients in other
chaotic systems, for example, billiards with narrow tunnels
\cite{MachtaZwanzig}. However, in order to present the ideas of our proof in the simplest possible
setting, we restrict our attention here to the setup of \cite{GonzalezTokmanHuntWright}.
%, with the ratio $\al/(1-\al)$
%depending on the relative sizes of the holes.

%\comment{
%\begin{figure}[htbp]
%\begin{center}
%\label{figure:2xmod1}
%\resizebox{5cm}{!}{\input immsFig1.pdf_t}
%\caption{Dashed: initial system. Thick: metastable system.}
%\end{center}
%\end{figure}
%}

%Fix $*\in\{\lt,\rt\} $.  Imagine that we start an initial probability distribution $\nu$ on $* $
%and push it forward under the $T_\ep $-dynamics, allowing points whose forward orbits
%enter $H_{*,\ep} $ to escape, at which point we disregard them, i.e.~for the moment,
%we do not allow returns to $* $.  Using Proposition~\ref{P:escaperate},
%we can show that for reasonable initial distributions, the probability of...

%Thus we expect that as far as the diffusion coefficient is concerned,
%$\rho_\ep(n) \approx \rho_\ep^M(\ep n) $ and
%$\bD_\ep\approx \frac{1}{\ep}\bD^M $.

\section{Statement of the main results}\label{S:results}

In this section, we define a family of dynamical systems with $m$
nearly invariant (metastable) subsets. They are
perturbations of a one-dimensional piecewise smooth expanding map
with $m$ invariant subintervals $I_1, I_2, \dots, I_m$
of positive Lebesgue measure.
On each of these intervals, the unperturbed system has a unique \acim.
The perturbations break this invariance in such a way that each perturbed
system will have only one \acim.  Our main result is an asymptotic formula for the diffusion coefficent of a smooth observable as the size of the perturbation tends to zero.  We also show that the sequence of jump times in between
different intervals asymptotically approach exponential random variables.

Let $I=[0,1].$ In this paper, a map $T:I\circlearrowleft$ is called
a piecewise $C^2 $ map with $\C=\{ 0=c_0 < c_1<  \cdots < c_d=1\}$
as a critical set if for each $i$, $T\vert_{(c_i,c_{i+1})}$ extends
to a $ C^2$ function on a neighborhood of $[c_i,c_{i+1}]$.  We call $T$ uniformly expanding if its minimum expansion, $\inf_{x\in I\setminus \C_0}|T_0'(x)|$, is greater than 1.  As is customary for piecewise smooth maps, we consider $T$ to be bi-valued at points $c_i\in\C$ where it is discontinuous. In such cases we let
$T(c_i)$ be both values obtained as $x$ approaches $c_i$ from either side.

\subsection{The initial system and its perturbations}\label{S:results_assumptions}

The unperturbed system is a piecewise $C^2$ uniformly expanding map
$T_0:I\circlearrowleft$ with $\C_0=\{0= c_{0,0}< c_{1,0}<  \cdots <
c_{d,0}=1\}$ as a critical set.  There are boundary points $\cB=\{b_j\}\subset(0,1)$
such that
$I_j=[b_{j-1}, b_j]$ ($b_0=0, b_{m}=1$)
i.e. $T_0 (I_j)\subset I_j$.
The existence of an \acim~of bounded variation for $T_0\vert_{I_j}$
is guaranteed by \cite{LasotaYorke}.  We assume in addition:

\noindent \textbf{(I)} \emph{Unique mixing \acims on the initially invariant subsets:}
$T_0\vert_{I_j}$, $j \in\{1\dots m \}$, has only one \acim~$\mu_j$,
whose density is denoted by
$\phi_{j} = d\mu_{j}/dx$.  $(T_0 ,\mu_j) $ is mixing.

From \textbf{(I)}, it follows that all \acims~of $T_0$ are
convex combinations of the ergodic ones, $\{\mu_j\}.$

We define the points in $H_0 = (T_0^{-1} \cB)\setminus \cB$
to be \emph{infinitesimal holes}. (The exclusion of boundary points from the set of infinitesimal holes is not essential, although it does simplify our presentation.  See assumption \textbf{(V)} and the discussion thereafter.)
%Our reason for intentionally excluding the
%boundary point from being an infinitesimal hole in this discussion will be explained
%after assumption \textbf{(VI)}.

\noindent \textbf{(II)} \emph{No return of the critical set to the infinitesimal holes:}
For every $k>0$, $(T_0^k \C_0)\cap H_0=\emptyset$.

Since $\phi_j$ are of bounded variation, they can be
suitably defined so that they are  continuous except on at most a countable set of points where
it has jump discontinuities.
%Since necessarily $H_0\subset \C_0$, we know that this
Moreover (see \cite[Section 4.2]{GonzalezTokmanHuntWright}), \textbf{(II)}
implies that after $\phi_{j}$ have been so defined, they are
continuous at each of the infinitesimal holes.
% in $*$.

\noindent \textbf{(III)} \emph{Positive \acims~at infinitesimal holes:}
$\phi_{j}$  is positive at each of the points in $H_0\cap I_j $.

\textbf{(II)} and \textbf{(III)} are generic conditions, although \textbf{(II)} may be difficult to verify for specific examples.  Similar assumptions are made elsewhere, including in \cite[Section 3.2]{KellerLiverani09} and \cite{GonzalezTokmanHuntWright}.
%\flag{Do we need this?}

\noindent \textbf{(IV)} \emph{Restriction on periodic critical points:}
Either
\begin{enumerate}
\item [(a)] $\inf_{x\in I\setminus
\C_0}|T_0'(x)|>2$, or
\item [(b)] $T_0 $ has no periodic critical points, except possibly that $0 $ or $1 $ may be fixed points.
\end{enumerate}

Because $T_0$ may be bi-valued at points in $\C_0$, a critical point $c_{i,0} $ is considered periodic if there exists $n>0 $ such that $c_{i,0}\in T_0^n \{ c_{i,0} \}$.
Condition \textbf{(IV)} is necessary in order to ensure that the transfer operators cooresponding to the perturbed systems defined below satisfy uniform Lasota-Yorke inequalities.  These uniform inequalities are essential for establishing the perturbative spectral results, Propositions \ref{P:secondEvalue} and \ref{P:escaperate}, which are a key ingredient of our proof.
Since we cannot exclude the possibility of the forward orbit of a critical point containing other critical points, these uniform inequalities do not follow directly from the original paper \cite{LasotaYorke}, but rather from later works \cite{BaladiYoung,BaladiBook}, see \cite[Section 4.2]{GonzalezTokmanHuntWright}.

In what follows, we consider $C^2$-small perturbations
$T_{\ep}:I\circlearrowleft$ of $T_0$ for $\ep>0$.  A critical set for $T_{\ep}$ may be chosen as
 $\C_\ep=\{0=c_{0,\ep}< c_{1,\ep}<  \cdots <
c_{d,\ep}=1\}$, where for each $i $, $\ep\mapsto c_{i,\ep}$ is a $C^2 $ function for $\ep\geq 0 $. Furthermore, there exists  $\del>0 $ such that for all sufficiently small $\ep$, there exists a $C^2 $ extension $\hat T_{i,\ep}:[c_{i,0}-\del,c_{i+1,0}+\del]\rightarrow \mathbb{R}$ of $T_{\ep}\vert_{[c_{i,\ep},c_{i+1,\ep}]}$, and $\hat T_{i,\ep} \rightarrow \hat T_{i,0}$ in the $C^2$ topology.

We also assume:

%\noindent \textbf{(V)} \emph{Unique \acim for $\ep>0$:}
% Uniqueness implies that $\mu_\ep$ is ergodic for $T_\ep$ \cite{LasotaYorke}.

\noindent \textbf{(V)} \emph{The boundary points do not move, and no holes are created near them:}
Precisely, for each $b\in\cB$ we have the following
\begin{itemize}
  \item[(a)] If $b\notin \C_0$, then necessarily $T_0(\bp)=\bp$. We assume further that for all $\ep>0 $, $T_\ep(\bp)=\bp$.
  \item[(b)] If $b\in \C_0$, we assume that $T_{0}(\bp_-)< \bp<T_{0}(\bp_+)$, and also that $b\in \C_\ep$ for all $\ep $.
\end{itemize}
This boundary condition can be considerably relaxed by suitably redefining
the holes discussed below, as was explained in \cite[Section 2.4]{GonzalezTokmanHuntWright}.
For simplicity of presentation, we do not detail these generalizations here.

Set $H_{ij,\ep}=I_i\cap
T_{\ep}^{-1}(I_j).$
We refer to these sets as \textit{holes}.
Once a $T_\ep$-orbit enters a hole, it leaves one of the invariant
sets for $T_0$ and continues in another. As $\ep\rightarrow 0$,
the holes converge  (in the Hausdorff metric) to the infinitesimal holes from which they arise.
Our assumptions imply that there exist numbers $\beta_{ij}\geq 0 $ such that
\begin{equation}\label{E:holesize}
    \mu_i(H_{ij,\ep}) = \ep \be_{ij} +o (\ep) .
\end{equation}
Conisider a continuous time Markov chain with states $1, 2, \dots, m$ and jump rates
from state $i$ to state $j$ equal to $\beta_{ij}.$

\noindent \textbf{(VI)} \emph{Irreducibility:}
The Markov chain defined above is irreducible.

Condition \textbf{(VI)} implies that for
small $\ep>0$, $T_{\ep}$ has only one \acim~$\mu_\ep$, with density
$\phi_\ep = d\mu_\ep/dx$.

Examples of families $T_\ep$ that satisfy \textbf{(I)} through \textbf{(VI)} can be found in \cite[Section 2.4]{GonzalezTokmanHuntWright}.

It is natural to inquire if the family of \acims $\mu_\ep$ has a unique limit as $\ep \rightarrow 0$, and if this limit exists, how to express it as a convex combination of $\mu_1, \dots, \mu_m$.
This problem was successfuly addressed in \cite{GonzalezTokmanHuntWright}.

\begin{prop}[Theorem~1 in~\cite{GonzalezTokmanHuntWright}]\label{P:limitingdensity}
As $\ep\rightarrow 0$,
\[
    \phi_\ep\overset{L^1}{\longrightarrow} \phi_0 =\sum_j p_j \phi_j
\]
where $(p_1, p_2, \dots, p_m)$ is the invariant measure for the Markov chain.
\end{prop}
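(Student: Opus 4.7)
My approach is via the Perron-Frobenius operators $\L_\ep$ of $T_\ep$, combined with an exact one-step mass-flux balance for the intervals $I_j$. First, assumption \textbf{(IV)} together with the results of \cite{BaladiYoung,BaladiBook} give that $\L_\ep$ satisfies a Lasota-Yorke inequality on $BV$ with constants independent of $\ep$. Consequently $\Var(\phi_\ep)$ is uniformly bounded, so $\{\phi_\ep\}$ is relatively compact in $L^1$. If $\phi_{\ep_k}\to \phi_*$ in $L^1$ along some subsequence, then passing to the limit in the fixed-point equation $\L_{\ep_k}\phi_{\ep_k}=\phi_{\ep_k}$ (using the $C^2$ convergence $T_\ep\to T_0$) yields $\L_0\phi_*=\phi_*$, and assumption \textbf{(I)} then forces $\phi_*=\sum_j q_j\phi_j$ for some weights $q_j\ge 0$ with $\sum_j q_j=1$.

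The identification of $(q_j)$ with the Markov invariant probabilities $(p_j)$ comes from exact one-step flux balance. For every $\ep>0$, $\mu_\ep(I_j)=\mu_\ep(T_\ep^{-1}I_j)$; subtracting the common piece $\mu_\ep(I_j\cap T_\ep^{-1}I_j)$ from both sides gives
\[
    \sum_{i\neq j}\mu_\ep(H_{ji,\ep}) = \sum_{i\neq j}\mu_\ep(H_{ij,\ep}),
\]
an exact equality of outgoing and incoming fluxes. Dividing by $\ep$ and letting $\ep\to 0$ along the subsequence, I would show the left side converges to $q_j\sum_{i\neq j}\beta_{ji}$ and the right side to $\sum_{i\neq j}q_i\beta_{ij}$. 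These are precisely the stationarity equations $qQ=0$ for the generator $Q$ of the Markov chain; combined with $\sum q_j=1$, $q_j\ge 0$, and irreducibility \textbf{(VI)} they force $q=p$. Since the limit is then independent of the subsequence, the full family $\phi_\ep$ converges in $L^1$, completing the proof.

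The main obstacle is justifying $\ep^{-1}\mu_\ep(H_{ji,\ep})\to q_j\beta_{ji}$. Since $H_{ji,\ep}$ has Lebesgue measure of order $\ep$, the $L^1$ convergence $\phi_\ep\to\phi_*$ on its own controls $\mu_\ep(H_{ji,\ep})-q_j\mu_j(H_{ji,\ep})$ only to within $o(1)$, whereas we need $o(\ep)$. The plan is to combine the continuity of each $\phi_j$ at the infinitesimal holes (which follows from \textbf{(II)} as explained in Section~4.2 of \cite{GonzalezTokmanHuntWright}) with the uniform $BV$ bound on $\phi_\ep$, which restricts the oscillation of $\phi_\ep$ on any short interval. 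Together these permit upgrading $L^1$ convergence to uniform convergence of $\phi_\ep\to q_j\phi_j$ on shrinking neighborhoods of each infinitesimal hole in $I_j$, and this uniform convergence combined with \eqref{E:holesize} yields the required first-order estimate.
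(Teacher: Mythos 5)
The paper does not prove this Proposition; it imports it as Theorem 1 of \cite{GonzalezTokmanHuntWright}, so there is no internal proof to compare against. Still, your overall plan (uniform Lasota-Yorke inequality $\Rightarrow$ $L^1$-precompactness of $\{\phi_\ep\}$; any limit point is a convex combination $\sum q_j\phi_j$; exact one-step flux balance plus irreducibility pins down $q=p$) is the right skeleton, and the flux identity you derive is correct.

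The genuine gap is in your final technical step, where you assert that ``the uniform $BV$ bound on $\phi_\ep$ \ldots restricts the oscillation of $\phi_\ep$ on any short interval.'' It does not: a uniformly $BV$-bounded family can concentrate all of its variation on an arbitrarily short interval. For instance, $\phi_\ep = q_j\phi_j + \mathbf{1}_{(h-\ep,h+\ep)}$ converges to $q_j\phi_j$ in $L^1$, has variation bounded uniformly in $\ep$, and $\phi_j$ is continuous at $h$, yet the average of $\phi_\ep$ over an $O(\ep)$-window at $h$ stays bounded away from $q_j\phi_j(h)$. So $L^1$ convergence plus the $BV$ bound plus continuity of $\phi_j$ at the infinitesimal hole do not upgrade to uniform convergence on shrinking neighborhoods of the hole, and the required estimate $\ep^{-1}\mu_\ep(H_{ji,\ep})\to q_j\beta_{ji}$ does not follow from what you have written.

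To close this gap one needs to exploit that $\phi_\ep$ is actually an invariant density, $\phi_\ep=\L_\ep^n\phi_\ep$ for all $n$, together with assumption \textbf{(II)}, which prevents discontinuities of $T_\ep^n$ (and hence of $\L_\ep^n\phi_\ep$) from piling up at the infinitesimal holes for small $\ep$. The averaging action of $\L_\ep^n$ over exponentially many well-spread preimages is what rules out the concentration phenomenon in the counterexample above. That is precisely the nontrivial content of the analysis in \cite{GonzalezTokmanHuntWright}; compare also \eqref{DensityHoles} in Proposition~\ref{P:escaperate}, a statement of the same flavor which is obtained from Keller-Liverani perturbation theory rather than from $BV$-compactness alone. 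As written, your proposal treats the hardest step as routine, and that is where it breaks.
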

\noindent We define $\mu_0 $ to be the measure whose density is $\phi_0$.

\subsection{The main results}\label{S:results_theorem}

Proposition \ref{P:limitingdensity} shows that the limiting Markov chain provides useful information
about our metastable system. In this section we prove two additional results making the connection between the
dynamical system and the Markov chain more precise.

To state our first result we need to define, both for the $T_\ep $-dynamics and for the Markov chain,
a sequence of times that indicate when a transition occurs between  different $I_j$'s.

\noindent \textbf{For the Markov chain:}
Set $\ts^M_0 =0 $, and for $i>0 $, let $\ts^M_i $ be the $i^{\mathrm{th}} $ time that the Markov dynamics have
changed states.
Then for $i\geq 1 $, set $\T_i^M =\ts^M_i -\ts^M_{i -1} $. Let $z_i^M$ be the state of the chain after the
$i^{\mathrm{th}}$ transition.
Let $\pr$ denote the probability measure constructed on the space $\{1, \dots, m\} ^{[0,\infty)} $
for the Markov chain by starting at $I_r$ at $t =0 $ and then evolving forward by the Markov dynamics.
Then, given the present state of the chain, $\T_i^M $ are exponential random variables.
That is, for $t\geq 0$,
the densities are given by
\[
    d\pr(\T_i^M = t|z_{i-1}^M=j)=
    \be_j e^{-\be_j t} dt
\]
where $\be_j=\sum_k \be_{jk}.$ Also given that $z_{i-1}=j$, $z_i$ is independent of $\T_i^M$
and
\[
\pr(z_i^M = k|z_{i-1}^M=j)=\be_{jk}/\be_j.
\]

\noindent \textbf{For $T_\ep$:} Let $z(x)=k$ if $x\in I_k.$
Set $\ts^\ep_0 =0 $, and for $i>0 $, let $\ts^\ep_i =\inf \{n>\ts^\ep_{i-1}:
z(T_\ep^n x)\neq z(T_\ep^{\ts^\ep_{i-1}} x)  \}$.
Then for $i\geq 1 $, set $\T_i^\ep =\ts^\ep_i -\ts^\ep_{i -1} $.

Our first main result
states that the finite dimensional distributions of jumps of the deterministic systems
converge to the finite dimensional distributions of jumps of the Markov chain.

\begin{thm}
\label{ThDetMC}
Fix $j, p$ and $S$.  For any intervals $\Delta_k=[a_k, b_k],$ and numbers $r_k\in \{1, \dots, m\},$
$k=1,\dots, p$
$$ \mu_j(\ep \T_k^\ep\in \Delta_k, z(\ts^\ep_k)=r_k,
\text{ for }k=1, \dots, p)\to \pr(\T_k\in \Delta_k, z_k^M=r_k, \text{ for }k=1, \dots, p) $$
and the convergence is uniform for $\max b_k \leq S$.
\end{thm}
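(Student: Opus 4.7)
The strategy is to reduce to the single-transition case $p=1$ by using that after each jump into a new interval $I_k$, the distribution on that interval relaxes to the invariant density $\phi_k$ on a time scale $O(\log(1/\ep))$ that is negligible compared to the transition time scale $O(1/\ep)$. One then iterates, treating each successive waiting time as approximately independent and exponential.

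For $p=1$, introduce the punctured transfer operator $\tilde\L_{j,\ep}f=\chi_{I_j}\L_\ep(\chi_{I_j}f)$, where $\L_\ep$ is the Perron--Frobenius operator of $T_\ep$. The spectral perturbation results announced in the introduction will give that $\tilde\L_{j,\ep}$ has a simple leading eigenvalue $\lam_{j,\ep}=1-\ep\be_j+o(\ep)$ with $\be_j=\sum_k\be_{jk}$, a leading eigenfunction $\phi_{j,\ep}\to\phi_j$ in $L^1$, and a uniform spectral gap. The event that the first exit from $I_j$ occurs at time exactly $N$ and lands in $I_k$ has $\mu_j$-mass
$$
\mu_j(\T_1^\ep=N,\,z(\ts_1^\ep)=k)=\int_{H_{jk,\ep}}\tilde\L_{j,\ep}^{N-1}\phi_j\,dx=\lam_{j,\ep}^{N-1}\int_{H_{jk,\ep}}\phi_{j,\ep}\,dx+O(\gamma^N)
$$
for some $\gamma<1$. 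Combined with \eqref{E:holesize} and setting $N\approx t/\ep$, this equals $\ep\be_{jk}e^{-\be_j t}(1+o(1))$; the Riemann sum over $N$ with $\ep N\in\Delta_1$ converges to $\int_{\Delta_1}\be_{jk}e^{-\be_j t}\,dt$, which is exactly $\pr(\T_1\in\Delta_1,\,z_1^M=k)$.

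For $p\geq 2$, the same argument is iterated. Given $(\ep\T_1^\ep,z_1^\ep)=(t_1,k)$, the conditional density of $T_\ep^{\ts_1^\ep}x$ on $I_k$ is proportional to $\chi_{I_k}\L_\ep(\chi_{H_{jk,\ep}}\phi_{j,\ep})$, which is uniformly bounded in $\bv$ norm by the uniform Lasota--Yorke inequality guaranteed by assumption \textbf{(IV)}. Exponential mixing of $(T_0|_{I_k},\mu_k)$ from assumption \textbf{(I)}, together with stability of the spectral gap under the $C^2$-small perturbation, shows that after a buffer of $M=C|\log\ep|$ iterates this density lies within $O(\ep^{10})$ of $\phi_k$ in $L^1$. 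The probability that an additional transition occurs during the buffer is $O(M\ep)=O(\ep|\log\ep|)\to 0$, hence negligible. Applying the $p=1$ analysis with initial density arbitrarily close to $\phi_k$ then yields the conditional distribution of $(\ep\T_2^\ep,z_2^\ep)$; repeating $p$ times reproduces the product distribution of the target Markov marginal.

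The main obstacle is controlling cumulative errors uniformly for $\max b_k\leq S$. Since up to $N\sim S/\ep$ iterates of $\tilde\L_{j,\ep}$ are tracked, the expansion $\lam_{j,\ep}^N=e^{-\be_j\ep N}(1+o(1))$ requires a quantitative $o(\ep)$ bound on the perturbative error for $\lam_{j,\ep}$, not merely $o(1)$, as well as a uniform spectral gap across the $m$ punctured operators. Both should follow from Keller--Liverani-type perturbation arguments applied to the family $\{\tilde\L_{j,\ep}\}$, but organizing these estimates so that errors do not compound across the $p$ iterations is the chief bookkeeping challenge; the $C|\log\ep|$ buffer between successive jumps should absorb the multiplicative error terms between applications of the $p=1$ analysis.
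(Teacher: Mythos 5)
Your overall plan mirrors the paper's: an induction on $p$, a spectral analysis of the punctured (``hole'') transfer operator $\L_{j,\ep}$ for the $p=1$ case using Proposition~\ref{P:escaperate}, and for $p\ge 2$ a buffer period after each jump during which the density relaxes before the $p=1$ analysis is re-applied. The differences are in the choice of buffer length and, more importantly, in how the escape of mass during the buffer is controlled. There the proposal has a genuine gap.

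Your claim that ``the probability that an additional transition occurs during the buffer is $O(M\ep)=O(\ep|\log\ep|)\to 0$'' is not justified, and in fact a naive estimate gives $O(1)$, not $O(\ep|\log\ep|)$. At the instant of the jump, the conditional density on $I_k$ is proportional to $\chi_{I_k}\L_\ep(\chi_{H_{jk,\ep}}\phi_{j,\ep})$, an unnormalized function with $\bv$-norm $O(1)$ but total mass only $O(\ep)$; after normalizing, the density has $\bv$-norm (and hence $L^\infty$-norm) of order $1/\ep$. Using Lasota--Yorke, after $m$ further steps the $L^\infty$-norm is $O(\theta^m/\ep + 1)$, so the probability of being in $H_{k,\ep}$ at step $m$ is at most $C|H_{k,\ep}|(\theta^m/\ep+1)=O(\theta^m + \ep)$; summing over $m=1,\dots,M$ with $M=C|\log\ep|$ yields $O(1)$, not $O(\ep|\log\ep|)$. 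To push this to $o(1)$ one must exploit the structure of the holes: assumption \textbf{(II)} guarantees that, for a short fixed number of steps $M_0$ (independent of $\ep$), a freshly-landed orbit near the boundary cannot immediately re-enter any hole, and the Growth Lemma~\ref{LmGr} then bounds the return probability for $m>M_0$ by $C(\ep+\Lambda^{-m})$. This is exactly the content of the paper's Lemma~\ref{Lm2jumps} (tightness of the jump distribution), which you do not invoke or reprove. Without it, your buffer analysis does not close, because a constant fraction of the conditional mass could, for all you have shown, escape again within $O(1)$ steps after a jump.

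A secondary imprecision: the claimed $O(\ep^{10})$ $L^1$-proximity to $\phi_k$ after the buffer overstates what is available, since $\phi_{k,\ep}\to\phi_k$ only at rate $o(1)$; but this is harmless, as $o(1)$ suffices. The paper sidesteps the whole bookkeeping concern differently: it uses a buffer of length $\sigma/\ep$ (same order as the expected sojourn time), so after the buffer the density has $\bv$-norm uniformly bounded and the $p=1$ spectral argument applies directly, while Lemma~\ref{Lm2jumps} shows the mass lost during the $\sigma/\ep$ buffer is $o_{\sigma\to 0}(1)$, a quantity that is then sent to zero after $\ep\to 0$. Either buffer length can work, but both require the tightness estimate, which is the essential ingredient your proposal is missing.
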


Consider an observable $A: I \to \mathbb{R}$.  For each fixed small $\ep > 0$, the Ergodic Theorem provides a law of large numbers, i.e. $N^{-1} \sum_{k=0}^{N} A \circ T_\ep^k \rightarrow \mu_\ep(A)$  a.e. and in $L^1$ as $N \rightarrow \infty$.  If $A$ is of bounded variation, the Central Limit Theorem applies  \cite{Liverani95} and states that $N^{-1/2} \sum_{k=0}^N A \circ T_\ep^k$ approaches a normal distribution as $N \rightarrow \infty$.  We let $D_\ep(A)$ be the diffusion coefficent, which is the variance of the limiting normal distribution.

\begin{thm}
\label{thmmain}
For any observable $A$ of bounded variation, \[
 \ep D_\ep(A)\to \bD^M(\bA)
\]
as $\ep\to 0$, where
$\bA$ is the observable on the state space of our Markov chain such that
$\bA(j)=\int_{I_j} A \phi_j dx$ and
$\bD^M$ stands for the diffusion coefficient.
\end{thm}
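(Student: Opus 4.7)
The plan is to use the Green--Kubo formula together with a spectral decomposition of the transfer operator $\L_\ep$ of $T_\ep$ into its metastable and fast parts. By Liverani's CLT for observables of bounded variation,
\[
D_\ep(A) = \mu_\ep(\hat A^2) + 2\sum_{n=1}^\infty \int \hat A \cdot \L_\ep^n(\hat A\,\phi_\ep)\,dx,
\]
where $\hat A = A - \mu_\ep(A)$. The goal is to isolate the slowly decaying component of this correlation sum, which is the only part that survives the $\ep$-rescaling, and to identify its limit with $\bD^M(\bA)$.

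I would invoke the perturbative spectral results alluded to in the text (uniform Lasota--Yorke bounds plus Keller--Liverani stability) to obtain the decomposition $\L_\ep = \sum_{j=1}^m \lam_{j,\ep}\,\Pi_{j,\ep} + \mathcal{N}_\ep$, where $1 = \lam_{1,\ep}, \lam_{2,\ep}, \dots, \lam_{m,\ep}$ are the $m$ metastable eigenvalues clustering at $1$, the projectors $\Pi_{j,\ep}$ are close to projectors onto $\text{span}(\phi_1,\dots,\phi_m)$, and $\|\mathcal{N}_\ep^n\|_{\bv\to\bv}\leq Cr^n$ uniformly in $\ep$ for some $r<1$. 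Moreover $(1-\lam_{j,\ep})/\ep \to \nu_j$ where $-\nu_j$ are the eigenvalues of the generator $Q$ of the Markov chain, with $\nu_1=0$ and $\nu_j>0$ for $j\geq 2$ by condition \textbf{(VI)}.

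Substituting this decomposition, the contribution of $\mathcal{N}_\ep$ is $O(1)$; the $j=1$ term vanishes since $\Pi_{1,\ep}(\hat A\,\phi_\ep) = \mu_\ep(\hat A)\,\phi_\ep = 0$; and each remaining term gives
\[
\frac{\lam_{j,\ep}}{1-\lam_{j,\ep}}\int \hat A\,\Pi_{j,\ep}(\hat A\,\phi_\ep)\,dx \sim \frac{1}{\ep\,\nu_j}\int A\,\Pi_{j,0}(A\,\phi_0)\,dx.
\]
Multiplying by $\ep$ and sending $\ep\to 0$ yields a formula for $\lim \ep D_\ep(A)$ in terms of the limiting projectors $\Pi_{j,0}$ on the $m$-dimensional space $V := \text{span}(\phi_1,\dots,\phi_m)$.

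The final step, which I expect to be the main obstacle, is to identify this expression with $\bD^M(\bA)$. Under the natural identification of $f = \sum_i a_i \phi_i \in V$ with the vector $(a_1,\dots,a_m)\in \R^m$, the pairing $\int A\,f\,dx$ becomes $\sum_i a_i\,\bA(i)$ since $\int A\,\phi_i\,dx = \bA(i)$. One must then show that the rescaled restriction $(\L_\ep - I)/\ep$ on $V$ converges to an operator conjugate to the generator $Q$; under this conjugacy $\Pi_{j,0}$ corresponds to the spectral projector of $Q$ onto its $j$-th eigenspace, and the formula for $\lim \ep D_\ep(A)$ collapses to the standard Green--Kubo representation of $\bD^M(\bA)$. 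The delicate part is constructing left and right metastable eigenvectors of $\L_\ep$ to leading order in $\ep$, checking that their matrix of pairings is exactly $Q$ after rescaling, and verifying that the inner-product pairings with $A$ reproduce $\bA$ on the Markov side. Since $\L_\ep$ is non-self-adjoint, left and right spectral data must be tracked separately, and the argument should be phrased in terms of spectral projectors rather than eigenvectors to accommodate possible non-diagonalizability of $Q$ or its perturbation.
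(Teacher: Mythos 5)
Your proposal and the paper both start from the Green--Kubo formula, but from there they diverge sharply, and the route you take runs directly into the difficulty the authors deliberately engineered their argument to avoid.

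Your plan hinges on a full spectral decomposition $\L_\ep = \sum_{j=1}^m \lam_{j,\ep}\,\Pi_{j,\ep} + \mathcal{N}_\ep$ with the precise asymptotics $(1-\lam_{j,\ep})/\ep \to \nu_j$ (eigenvalues of the generator $G$) and with individual projectors $\Pi_{j,\ep}$ converging to identifiable limits $\Pi_{j,0}$. None of this is established in the paper, and it is not an oversight: the remark after Proposition~\ref{P:secondEvalue} explicitly states that sharp asymptotics of the near-unit spectrum are only known for $m=2$, that the general case is expected but not proved, and that the authors chose the much weaker decay bound $\|\L_\ep^{\kappa n/\ep}\|_{\bv_0}\le\eta^n$ precisely because it suffices. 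Your decomposition also runs into a degenerate-perturbation issue you partly acknowledge but don't resolve: for $\L_0$ the eigenvalue $1$ has multiplicity $m$, so while the total projector $\sum_j\Pi_{j,\ep}$ converges to $P$, the individual $\Pi_{j,\ep}$ need not converge at all (and the $\lam_{j,\ep}$ may be complex or coalesce if $G$ has Jordan blocks). So the step $\int \hat A\,\Pi_{j,\ep}(\hat A\,\phi_\ep)\,dx \sim \int A\,\Pi_{j,0}(A\,\phi_0)\,dx$ is not justified, and making it rigorous would be a substantial piece of work in its own right --- essentially harder than Theorem~\ref{thmmain} itself.

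The paper's proof is organized to sidestep exactly this. It uses Proposition~\ref{P:secondEvalue} only to truncate the correlation sum at $n\le S/\ep$, then for each $n\approx t/\ep$ inserts the unperturbed projector $P$ (after a short burn-in of length $n_0$), uses mixing of $T_0$ to reduce the pairing to $\sum_k$-sums over the subintervals, and finally --- this is the key input you are missing --- invokes Theorem~\ref{ThDetMC} (convergence of the jump process) to identify $\mu_j(T_\ep^{n-2n_0}x\in I_k)\to p_{jk}(t)$. In other words, the identification with the Markov correlation function comes not from matching eigenvalues and eigenvectors of $\L_\ep$ with those of $G$, but from the probabilistic convergence of the transition times already proved in Section~\ref{S:jumpprocess}. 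If you want to rescue your approach you would first need to prove the near-unit eigenvalue and (regularized) eigenprojector asymptotics for general $m$, and handle non-diagonalizable $G$; the paper's route avoids all of this by routing through Theorem~\ref{ThDetMC} and the coarse bound of Proposition~\ref{P:secondEvalue}.
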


\begin{remark}
$\bD^M$ can be computed efficiently. Assume for simplicity that $\bA$ has zero mean (otherwise we subtract
a constant from $\bA$). Let  $G$ denote the generator matrix of our Markov process, that is $G_{jk}=\be_{jk}$
if $j\neq k$ and $G_{jj}=-\be_j.$ Then
$$ \bD^M=\sum_{jk} p_j \bA(j) \int_0^\infty p_{jk}(t) \bA(k) dt=
\langle p \bA, \int_0^\infty e^{tG} \bA dt \rangle=
\langle p \bA, G^{-1} \bA\rangle . $$
\end{remark}

The proof of Theorems \ref{ThDetMC} and \ref{thmmain} depend on two perturbation results,
Propositions \ref{P:secondEvalue} and \ref{P:escaperate}. Proposition \ref{P:escaperate} is a variation on a result
of Keller and Liverani \cite{KellerLiverani99, KellerLiverani09}, see Appendix~\ref{S:appendix} for details, while
Proposition \ref{P:secondEvalue} appears to be new. Our paper is organized as follows. In Section~\ref{S:preparation}
we recall the transfer operator approach to the study of piecewise expanding interval maps and state
Propositions \ref{P:secondEvalue} and \ref{P:escaperate}. In Section \ref{S:jumpprocess}
we derive Theorem \ref{ThDetMC} from
Proposition \ref{P:escaperate}. Section \ref{S:ProofMain} contains the derivation of Theorem \ref{thmmain} from
Theorem \ref{ThDetMC} and Proposition \ref{P:secondEvalue}. Finally in Section~\ref{S:ProofSecond} we explain how
Proposition \ref{P:secondEvalue} follows from Theorem \ref{ThDetMC}.

\section{Preparatory material}\label{S:preparation}

\subsection{Function spaces and norms}\label{S:preparation_norms}

We use $\leb $ to denote normalized Lebesgue measure on $I$ and $L^1 $ to denote the space of
Lebesgue integrable functions on $I$.  For $f:I\rightarrow \mathbb{R}$, let $\Lone{f} =\int_I \abs{f(x)}\,dx $, $\infnorm{f}=\sup_{x\in I} \abs{f(x)}$, and $\var{f} $ be the total variation of $f$ over $I$; that is,
\[
\var{f}= \sup\{ \sum_{i=1}^n\abs{f(x_{i})-f(x_{i-1})}: n\geq 1, 0\leq x_0<x_1<\dots<x_n\leq 1 \}.
\]
$\bv=\bv(I)$  is the Banach space of functions $f : I \rightarrow \mathbb{R}$ with norm
\[
    \norm{f}=\inf_{g=f \text{ except on a countable set}}\var{g}+\infnorm{g}.
\]
An element of $\bv$ is technically an equivalence class of functions, any two of which agree except on a countable set; we generally ignore this distinction.

\subsection{Transfer operators and the spectral setting}\label{S:preparation_operators}

For $\ep \ge 0$, let $\L_\ep $ be the transfer operator associated with $T_\ep $ acting on $\bv $, i.e.
\[
    \L_\ep A(x) = \sum_{y\in T_\ep^{-1}\{x\} } \frac{A(y)}{\abs{T_\ep'(y)}}.
\]
%As shown in Section 4.2 of \cite{GonzalezTokmanHuntWright},
%the results of \cite{KellerLiverani99} are applicable here.
%Hence, for all small $\ep$, $\L_\ep$ is quasicompact.
Note that $\L_0$ has 1 as an isolated eigenvalue of multiplicity $m$, and then a spectral gap.
Let $P$ denote the associate spectral projection, that is
$$ (PA)(x)=\left(\int_{I_j} A(y) dy \right) \phi_j(x) \text{ if } x\in I_j. $$
For small $\ep>0$, 1 becomes an isolated eigenvalue creating a small spectral gap.
Note that $\L_\ep$ preserves the space $\bv_0$ of $\bv$-functions with zero mean.

%More specifically, for small $\ep > 0$, let
%$\cP_{1,\ep},$ $\cP_{2,\ep}$ and $\tcP_{\ep}$ be spectral projections corresponding to 1, $\lambda_\ep$ and
%the rest of the spectrum respectively. Let $\Pi_\ep=\cP_{1,\ep}+\cP_{2,\ep}.$

\begin{prop}\label{P:secondEvalue}
%\mbox{\newline}
There exists $\eta<1$ and $\kappa>0$ such that
$$ ||\L_\ep^{\kappa n/\ep}||_{\bv_0} \leq \eta^n. $$
%\begin{itemize}
% \item [(a)] $\lam_{\ep}=1-(\be_\lt +\be_\rt) \ep+o (\ep)$.
% \item [(b)] There exists $C$ such that for all small $\ep$,
%$$||\cP_{1,\ep}||,\:   ||\cP_{2,\ep}||,\:   ||\tcP_{\ep}||<C.$$
% \item [(c)] As $\ep\to 0$, $\cP_{1,\ep}\to \cP_{1}$, where
%$ \cP_{1}(A)=\left(\int A(y) \, dy\right) \phi_0, $
%and $\Pi_\ep\to \Pi_0$, where
%$$\Pi_0(A)=\left(\int_\lt A(y) \, dy\right) \phi_\lt+\left(\int_\rt A(y) \, dy \right) \phi_\rt.$$
%Here the convergence is in the sense of operators from $\bv \to L^1.$
% \item [(d)] There exist $K>0$ and $\theta<1$ such that, for all small $\ep$ and all $n$,
%$||\L_\ep^n \tcP_\ep ||\leq K \theta^n.$
%\end{itemize}
\end{prop}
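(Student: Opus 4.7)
The plan is to reduce $\L_\ep|_{\bv_0}$ to a finite-dimensional matrix approximating the Markov semigroup $e^{tG}$ (with $G$ the generator of the limit Markov chain), and then deduce contraction from irreducibility. For the first step (spectral splitting), I would use Proposition \ref{P:escaperate} together with the uniform Lasota--Yorke inequalities from assumption \textbf{(IV)} to invoke the Keller--Liverani perturbation machinery \cite{KellerLiverani99,KellerLiverani09}, producing for all small $\ep>0$ an $\L_\ep$-invariant splitting $\bv = V_\ep \oplus W_\ep$ with $\dim V_\ep = m$, all eigenvalues of $\L_\ep|_{V_\ep}$ within $o(1)$ of $1$, and $\|\L_\ep^n|_{W_\ep}\|_\bv \leq C \rho^n$ for some fixed $\rho<1$ independent of $\ep$. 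The associated spectral projection onto $V_\ep$ converges in operator norm to $P$, so $V_\ep$ is spanned by quasi-invariant densities $\phi_{j,\ep}$ approximating $\phi_j$ in $L^1$. Because the $W_\ep$-component decays geometrically at an $\ep$-independent rate, it becomes negligible on timescale $O(\log(1/\ep))$, and it suffices to control $\L_\ep$ on $\bv_0 \cap V_\ep$.

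Writing $\L_\ep|_{V_\ep}$ as a matrix $M_\ep$ in the basis $\{\phi_{j,\ep}\}$, I claim that for each fixed $t>0$, $M_\ep^{\lfloor t/\ep \rfloor} \to e^{tG}$ as $\ep \to 0$, where $G_{jk} = \be_{jk}$ for $j \neq k$ and $G_{jj} = -\be_j$. Indeed, up to $o(1)$ corrections coming from the $W_\ep$-part (step 1), $(M_\ep^n)_{jk}$ equals $\int_{I_k} \L_\ep^n \phi_j \, dx = \mu_j(z(T_\ep^n x)=k)$. Expanding this probability as a sum over all possible sequences of jump times and jump states up to time $n = \lfloor t/\ep \rfloor$, Theorem \ref{ThDetMC} yields term-by-term convergence to the corresponding Markov transition probabilities; truncating at a large number of jumps (justified by the Poisson-type control on the number of jumps of the limiting Markov chain in a fixed time window) then gives the full convergence to $p_{jk}(t) = (e^{tG})_{jk}$.

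For the final step, irreducibility (assumption \textbf{(VI)}) implies that $G$ has $0$ as a simple eigenvalue with eigenvector $(p_1,\dots,p_m)$ and all other eigenvalues with strictly negative real part; hence $e^{tG}$ contracts the zero-sum hyperplane $\{v\in\R^m:\sum_k v_k = 0\}$ exponentially, and we may choose $t_0$ with $\|e^{t_0 G}\|\leq 1/e$ on this hyperplane. Since $\bv_0 \cap V_\ep$ corresponds (via the coordinate functionals $f\mapsto(\int_{I_k} f)_k$) to the zero-sum hyperplane up to $o(1)$ as $\ep\to 0$, step 2 combined with step 1 produces $\|\L_\ep^{\lfloor t_0/\ep \rfloor}\|_{\bv_0} \leq \eta < 1$ for all sufficiently small $\ep$. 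Setting $\kappa=t_0$ and iterating yields the claimed bound $\|\L_\ep^{\kappa n/\ep}\|_{\bv_0} \leq \eta^n$. The main obstacle is step 2: Theorem \ref{ThDetMC} describes the distribution of jump times, and converting it into convergence of the matrix-valued quantity $M_\ep^{\lfloor t/\ep \rfloor}$ requires summing over all possible jump counts with uniform tail control, and also demands that the perturbed densities $\phi_{j,\ep}$ track $\phi_j$ strongly enough that the identification $V_\ep \simeq \R^m$ is uniform in $\ep$. Once step 2 is secured, steps 1 and 3 are standard spectral radius arguments.
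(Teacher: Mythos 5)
Your plan is conceptually sound and takes a genuinely different route from the paper's, although it contains a technical overclaim that matters. The paper never constructs the explicit $m$-dimensional invariant subspace $V_\ep$. Instead, it reduces the $\bv$-contraction to an $L^1$-estimate via the uniform Lasota--Yorke inequality \eqref{LYV}--\eqref{LYL}: if $A\in\bv_0$ has (approximately) zero average on each $I_k$, then a fixed number $n_0$ of iterations of $\L_\ep$ contract it in $L^1$ (mixing of $T_0$ on each $I_k$, plus $C^0$-continuity of $\L_\ep$ as a map $\bv\to L^1$); and Theorem \ref{ThDetMC} shows that after $\sim\kappa/\ep$ iterations any $A\in\bv_0$ becomes nearly zero-average on each $I_k$, because $\int_{I_k}\L_\ep^{\lfloor t/\ep\rfloor}(PA)\,dx$ is asymptotic to $\sum_j \bigl(\int_{I_j}A\,dy\bigr) p_{jk}(t)$ and $p_{jk}(t)\to p_k$ as $t\to\infty$, so the expression tends to $p_k\int_I A\,dy=0$. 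The core step -- converting the jump-process convergence of Theorem \ref{ThDetMC} into convergence of $\mu_j(z(T_\ep^{\lfloor t/\ep\rfloor}x)=k)$ to $p_{jk}(t)$, and exploiting irreducibility via $p_{jk}(t)\to p_k$ -- is exactly what your step 2 and step 3 require. The difference is the wrapping: the paper wraps this in raw $L^1$/$\bv$ norm estimates, which is thinner machinery than your spectral-subspace reduction and avoids having to calibrate an $\ep$-dependent coordinate system.

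The overclaim in your version is that the spectral projection onto $V_\ep$ "converges in operator norm to $P$." Keller--Liverani perturbation theory gives uniform $\bv$-boundedness of the projections $\Pi_\ep$ and convergence $\Pi_\ep\to P$ only in the weaker $\bv\to L^1$ topology; operator-norm convergence on $\bv$ fails in general, even for $C^2$-small perturbations. You then need this to identify $\bv_0\cap V_\ep$ uniformly with the zero-sum hyperplane and to control the $\bv$-norm of $\L_\ep^{\lfloor t_0/\ep\rfloor}|_{\bv_0\cap V_\ep}$ through the matrix $M_\ep^{\lfloor t_0/\ep\rfloor}$. To make this rigorous you would instead need: uniform $\bv$-boundedness of $\Pi_\ep$ (available); an $\ep$-uniform lower bound on the $L^1$-angle between the candidate basis vectors (the $\phi_{j,\ep}$ are eigenfunctions of $\L_{j,\ep}$ on $\bv(I_j)$, not of $\L_\ep$, and only \emph{approximately} span $V_\ep$, so one must either use the true eigenbasis of $\L_\ep|_{V_\ep}$ or quantify the discrepancy); and the observation that the coefficient functionals dual to the basis are asymptotically $(\int_{I_k}\cdot\,dx)_k$. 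Each of these can be repaired, but none is free. The paper's route sidesteps all of it by never leaving the $L^1$/$\bv$ framework, at the cost of a slightly less transparent "why is this true" narrative than your semigroup picture $M_\ep^{\lfloor t/\ep\rfloor}\to e^{tG}$.
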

The proof of Proposition \ref{P:secondEvalue} is given is Section \ref{S:ProofSecond}.

\begin{remark}
In case of two intervals much more precise asymptotics of the spectral gap can be deduced from the results of
\cite{KellerLiverani09}. It is likely that a similar statement holds for an arbitrary number of intervals, however we do
not pursue this question here since the weaker version stated above is sufficient for our purposes. In fact we hope
that the argument used to prove Proposition \ref{P:secondEvalue} can be extended also to the case of piecewise
hyperbolic systems such as billiards consisting of several regions connected by small holes.
\end{remark}

For $j\in\{1, \dots, m\}$, let $\L_{j,\ep} $ be the transfer operator acting on $\bv(I_j) $
 where we consider $H_{j,\ep}$ as a hole, i.e.
\[
    \L_{j,\ep} A(x) = \sum_{y\in (I_j\cap T_\ep^{-1}\{x\})} \frac{A(y)}{\abs{T_\ep'(y)}}.
\]
From \cite{KellerLiverani99} we see that, for small $\ep>0 $, $\L_{j,\ep}$ has an isolated simple eigenvalue  $\lam_{j,\ep}<1$ of multiplicity one with $\lam_{j,\ep}\to 1$ as $\ep\to 0 $, and otherwise the spectrum has a uniform spectral gap.  More precisely, if $\cQ_{j, \ep}$ is the spectral projection corresponding to $\lam_{j,\ep}$, the following
statement holds.

\begin{prop}\label{P:escaperate}
There exists a measure $\nu_{j,\ep}$ and a function $\phi_{j,\ep} \in \bv(I_j)$ such that
$$\nu_{j,\ep}(\phi_{j,\ep})=1\quad, \cQ_{j,\ep} = \nu_{j, \ep} (\cdot) \phi_{j, \ep} $$
and
\begin{itemize}
 \item [(a)] $\lam_{j,\ep}=1-\be_j \ep+o (\ep)$.

 \item [(b)] As $\ep\to 0,$
$\nu_{j,\ep}(A)\to \int A(y) dy$ where the convergence holds in strong topology in $\bv^*$ and
$\phi_{j,\ep}\to \phi_j$ in $L^1.$ Moreover
\begin{equation}
\label{DensityHoles}
\lim_{\ep\to 0} \sup_{H_{j,\ep}} |\phi_{j,\ep}-\phi_j|= 0.
\end{equation}

\item [(c)] There exists $C > 0$ such that, for all small $\ep$, $A \in \bv(I_j)$,
and all $n \ge 0$, $$ \left\Vert \lam_{j,\ep}^{-n} \L_{j,\ep}^n A-\cQ_{j, \ep} A \right\Vert\leq C \theta^n \norm{A}.$$
\end{itemize}
\end{prop}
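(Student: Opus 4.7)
The plan is to apply the Keller--Liverani perturbation theory, essentially as in \cite{KellerLiverani99}, and then to extract the leading order asymptotic for the eigenvalue by a direct computation. First I would establish a uniform Lasota--Yorke inequality for the family $\{\L_{j,\ep}\}$, i.e.\ a bound of the form $\var{\L_{j,\ep}^n A}\leq C_1 \rho^n\var{A}+C_2\Lone{A}$ with constants independent of $\ep$ and $\rho<1$. Assumption \textbf{(IV)} is exactly what one needs to prevent the number of branches from overwhelming the expansion once one iterates the operator; this is essentially the argument in \cite{BaladiYoung,BaladiBook} adapted to the hole setting, following \cite[Section~4.2]{GonzalezTokmanHuntWright}.

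Next I would check the ``weak convergence'' hypothesis of Keller--Liverani, namely that $\norm{\L_{j,\ep}-\L_{j,0}}_{\bv\to L^1}\to 0$ as $\ep\to 0$. This holds because the two operators differ only by summation over a subset of inverse branches whose $\leb$-mass on $I_j$ is $O(\ep)$, and for a $\bv$-function $A$, integrating over a set of measure $O(\ep)$ gives $O(\ep)\norm{A}$. Combining these two ingredients, the Keller--Liverani theorem delivers part (c) directly, together with the convergence $\lam_{j,\ep}\to 1$, the $L^1$-convergence $\phi_{j,\ep}\to\phi_j$, and the strong $\bv^*$-convergence $\nu_{j,\ep}\to \leb$. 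This handles (c) and the first two claims of (b).

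For the uniform convergence on holes, equation \eqref{DensityHoles}, I would exploit assumption \textbf{(II)}: since no forward orbit of $\C_0$ ever hits $H_0$, the densities $\phi_j$ are continuous in a neighborhood of each infinitesimal hole, and the same reasoning applied to $\L_{j,\ep}^n$ for fixed large $n$ shows that $\phi_{j,\ep}$ inherits a uniform modulus of continuity on shrinking neighborhoods of the holes; this is exactly the mechanism used in \cite[Section~4.2]{GonzalezTokmanHuntWright} to propagate continuity through iterates despite the jumps introduced by the critical points. Combined with $L^1$-convergence, this upgrades to the required uniform convergence on $H_{j,\ep}$.

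Finally, for the asymptotic in (a), I normalize so that $\int_{I_j}\phi_{j,\ep}\,dx=1$ and integrate the eigenvalue equation $\L_{j,\ep}\phi_{j,\ep}=\lam_{j,\ep}\phi_{j,\ep}$ over $I_j$. Using the duality identity $\int_{I_j}\L_{j,\ep}A\,dx=\int_{I_j\setminus H_{j,\ep}}A\,dx$, one obtains
\[
1-\lam_{j,\ep}=\int_{H_{j,\ep}}\phi_{j,\ep}\,dx.
\]
Write $\int_{H_{j,\ep}}\phi_{j,\ep}\,dx = \int_{H_{j,\ep}}\phi_j\,dx+\int_{H_{j,\ep}}(\phi_{j,\ep}-\phi_j)\,dx$; by \eqref{E:holesize} the first term equals $\ep\be_j+o(\ep)$, and by \eqref{DensityHoles} together with $\leb(H_{j,\ep})=O(\ep)$ the second is $o(\ep)$. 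This gives (a). The main obstacle is the uniform convergence \eqref{DensityHoles}: plain Keller--Liverani only yields $L^1$-convergence, and one really does need the pointwise/uniform control near the holes (and hence assumption \textbf{(II)}) both to pin down the leading coefficient $\be_j$ in (a) and, downstream, to make the Markov approximation quantitative. Everything else is a relatively routine application of the existing perturbative spectral machinery.
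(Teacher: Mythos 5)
Your proposal is correct, and for parts (b), (c), and the preparatory steps (uniform Lasota--Yorke inequality via \textbf{(IV)}, weak $\bv\to L^1$ closeness, appeal to \cite{KellerLiverani99} for the spectral data and to \cite{GonzalezTokmanHuntWright} for \eqref{DensityHoles}) you follow essentially the same route as the paper. Where you genuinely diverge is part (a). The paper invokes \cite[Theorem~2.1]{KellerLiverani09}, which produces the abstract formula
\[
\lim_{\ep\to 0}\frac{1-\lam_{j,\ep}}{\Delta_\ep}=1-\sum_{k\ge 0}q_k,\qquad
\Delta_\ep=\mu_j(H_{j,\ep}),
\]
and then shows each correction term $q_k$ vanishes by comparing $T_0$- and $T_\ep$-preimages of $H_{j,\ep}$, using assumption \textbf{(II)} to guarantee continuity of $\phi_j$ at the relevant preimage points. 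You instead integrate the eigenvalue equation against Lebesgue measure to get the exact identity $1-\lam_{j,\ep}=\int_{H_{j,\ep}}\phi_{j,\ep}\,dx$ (up to a $1+o(1)$ normalization factor, since the proposition normalizes by $\nu_{j,\ep}(\phi_{j,\ep})=1$ rather than $\int\phi_{j,\ep}=1$, but these agree to leading order because $\nu_{j,\ep}\to\leb$ strongly in $\bv^*$), and then use \eqref{DensityHoles} plus $\leb(H_{j,\ep})=O(\ep)$ (which needs \textbf{(III)}) to replace $\phi_{j,\ep}$ by $\phi_j$ and invoke \eqref{E:holesize}. Your route is more elementary in that it sidesteps the correction series of \cite{KellerLiverani09} entirely; the price is that it leans directly on the uniform control \eqref{DensityHoles}, whereas the paper's computation of the $q_k$ uses \textbf{(II)} only through continuity of the unperturbed density $\phi_j$ at preimages of holes and does not need the perturbed density $\phi_{j,\ep}$ to converge uniformly there. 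Both arguments are valid; yours makes the dependence on \eqref{DensityHoles} more structural, which is worth noting since it means (a) cannot be logically disentangled from the last claim of (b) in your presentation.
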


This proposition follows from the work of Keller and Liverani \cite{KellerLiverani99, KellerLiverani09},
see Appendix \ref{S:appendix} for details.

\section{Convergence of the jump process}\label{S:jumpprocess}

\subsection{Tightness}\label{S:jumpprocess_jumps}
We need to know
that the distribution of jumps is tight so we begin with the following result.

\begin{lem}
\label{Lm2jumps}
Given $S, \delta, p$ there exists $\sigma$ such that, for all $\ep$ sufficently small,
$$\mu_p(\exists k \text{ with } t^\ep_k \le S/\ep \text{ and } \T_{k+1}^\ep \le \sigma/\ep)\leq \delta.$$
\end{lem}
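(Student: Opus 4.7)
The plan is to bound the probability by a union bound over all discrete times at which a rapidly-following second jump could occur, with each term controlled using Proposition~\ref{P:escaperate}. For $n\ge 1$ and $i\neq j$, introduce the sub-probability density on $I_j$,
\[
\rho_n^{(i,j)}(y) := \chi_{I_j}(y)\sum_{x\in I_i\cap T_\ep^{-1}\{y\}}\frac{(\L_\ep^{n-1}\phi_p)(x)}{\abs{T_\ep'(x)}},
\]
representing the portion of the time-$n$ mass on $I_j$ which entered via a jump from $I_i$ at step $n$. The uniform Lasota--Yorke inequalities valid under assumption~\textbf{(IV)} give $\norm{\rho_n^{(i,j)}}\le C$ independently of $n$ and $\ep$, while \eqref{E:holesize} combined with the uniform $L^\infty$ bound on $\L_\ep^{n-1}\phi_p$ yields $\int \rho_n^{(i,j)}\,dy = O(\ep)$ per term. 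Summing, $\sum_{n\le S/\ep}\sum_{i\neq j}\int \rho_n^{(i,j)}\,dy \le CS$ is the total expected number of transitions during time $S/\ep$.

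The event in the lemma is contained in $\{\T_1^\ep\le\sigma/\ep\}\cup \bigcup_{n,i,j} B_{n,i,j}$, where $B_{n,i,j}$ is the event that a jump from $I_i$ to $I_j$ at step $n$ is followed by a further jump within the next $\sigma/\ep$ steps, and
\[
\mu_p(B_{n,i,j}) = \int \rho_n^{(i,j)}\,dy - \int_{I_j}\L_{j,\ep}^{\lfloor\sigma/\ep\rfloor}\rho_n^{(i,j)}\,dy.
\]
By Proposition~\ref{P:escaperate}(c) together with parts (a)--(b),
\[
\int_{I_j}\L_{j,\ep}^{\lfloor\sigma/\ep\rfloor}\rho_n^{(i,j)}\,dy = \lambda_{j,\ep}^{\lfloor\sigma/\ep\rfloor}\,\nu_{j,\ep}(\rho_n^{(i,j)}) + O\!\bigl(\theta^{\lfloor\sigma/\ep\rfloor}\bigr),
\]
with $\lambda_{j,\ep}^{\lfloor\sigma/\ep\rfloor}\to e^{-\beta_j\sigma}$, so each term contributes at most $(1-e^{-\beta_j\sigma}+o(1))\int\rho_n^{(i,j)}\,dy \le C\sigma\int\rho_n^{(i,j)}\,dy + (\text{error})$ for small $\sigma$. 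The initial term $\mu_p(\T_1^\ep\le\sigma/\ep)$ is handled the same way by applying Proposition~\ref{P:escaperate} directly to $\phi_p$, yielding a bound $\le C\sigma + o(1)$. Summing over all triples and over the two cases gives $\mu_p(\text{event})\le C\sigma(S+1)+E_\ep$, whence choosing $\sigma$ so that $C\sigma(S+1)\le\delta/2$ and then $\ep$ small enough that $E_\ep\le\delta/2$ completes the proof.

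The main obstacle is the control of the cumulative error $E_\ep$, which aggregates roughly $S/\ep$ summands and therefore demands that each individual error be $o(\ep)$. The $O(\theta^{\lfloor\sigma/\ep\rfloor})$ remainder from Proposition~\ref{P:escaperate}(c) decays super-polynomially in $1/\ep$ for fixed $\sigma$ and is harmless. The more subtle piece $\lambda_{j,\ep}^{\lfloor\sigma/\ep\rfloor}\bigl|\nu_{j,\ep}(\rho_n^{(i,j)}) - \int\rho_n^{(i,j)}\,dy\bigr|$ is where care is needed; to control it I would decompose $\rho_n^{(i,j)} = c_n\phi_{j,\ep} + \tilde\rho_n^{(i,j)}$ with $c_n = \nu_{j,\ep}(\rho_n^{(i,j)})$ and $\nu_{j,\ep}(\tilde\rho_n^{(i,j)})=0$, then bound the steady-state exit contribution using \eqref{DensityHoles} and $\int_{H_{j,\ep}}\phi_{j,\ep}\,dx = \ep\beta_j + o(\ep)$, while the transient contribution is handled by exploiting the geometric decay $\norm{\L_{j,\ep}^k\tilde\rho_n^{(i,j)}}\le C\theta^k\norm{\tilde\rho_n^{(i,j)}}$ from Proposition~\ref{P:escaperate}(c) together with the hole size $|H_{j,\ep}|=O(\ep)$. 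This careful per-triple accounting of the error, ensuring that no summand exceeds $o(\ep)$, is the technical heart of the argument.
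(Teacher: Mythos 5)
Your proposal takes a genuinely different route from the paper: you set up a union bound over jump events $B_{n,i,j}$ and control each term with the spectral machinery of Proposition~\ref{P:escaperate}, whereas the paper proves tightness by passing to Lebesgue measure, sorting hole visits into ``essential'' and ``inessential'' ones, and invoking the Growth Lemma (Lemma~\ref{LmGr}) twice. Your accounting of the ``main'' contribution is sound --- $\int\rho_n^{(i,j)}\,dy=O(\ep)$, the sum over $n\le S/\ep$ of these masses is $O(S)$, and the factor $1-\lambda_{j,\ep}^{\lfloor\sigma/\ep\rfloor}\to 1-e^{-\beta_j\sigma}$ makes the leading term $O(S\sigma)$, small for small $\sigma$. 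The super-exponential remainder $O(\theta^{\sigma/\ep})$ from Proposition~\ref{P:escaperate}(c) is indeed harmless after summing over $O(S/\ep)$ values of $n$.

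However, there is a genuine gap in the error analysis, precisely at the spot you flag as the ``technical heart.'' The density $\rho_n^{(i,j)}$ has $\norm{\rho_n^{(i,j)}}_{\bv}=O(1)$ but $\Lone{\rho_n^{(i,j)}}=O(\ep)$; this mismatch is fatal for the transient term as you bound it. In your decomposition $\rho_n^{(i,j)}=c_n\phi_{j,\ep}+\tilde\rho_n^{(i,j)}$, the transient part satisfies $\norm{\L_{j,\ep}^k\tilde\rho_n^{(i,j)}}_{\bv}\le C\theta^k$, which only gives $\infnorm{\L_{j,\ep}^k\tilde\rho_n^{(i,j)}}\le C\theta^k$, hence $\int_{H_{j,\ep}}\abs{\L_{j,\ep}^k\tilde\rho_n^{(i,j)}}\,dx\le C\ep\theta^k$. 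Summing over $k$ gives $O(\ep)$ for each fixed $(n,i,j)$; summing then over $n\le S/\ep$ gives a contribution $O(S)$, which is not small and in fact dominates the $O(S\sigma)$ main term. An $L^1$--BV interpolation does not rescue this: the bound stays $O(\ep)$ per jump. A similar problem afflicts the estimate $c_n=\nu_{j,\ep}(\rho_n^{(i,j)})=O(\ep)$; Proposition~\ref{P:escaperate}(b) only yields $\abs{\nu_{j,\ep}(\rho_n^{(i,j)})-\int\rho_n^{(i,j)}\,dy}=o(1)\cdot\norm{\rho_n^{(i,j)}}_{\bv}=o(1)$, not $O(\ep)$, though this second point can be patched by comparing $\Lone{\L_{j,\ep}^k\rho_n^{(i,j)}}$ with $\lambda_{j,\ep}^k\abs{c_n}$ for $k$ of order $\log(1/\ep)$.

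What is missing is the hypothesis that drives both the paper's proof and (implicitly) any spectral version of it: assumption~\textbf{(II)}, no return of the critical set to the infinitesimal holes. Because $\mathrm{supp}\,\rho_n^{(i,j)}\subset T_\ep(H_{ij,\ep})\cap I_j$ shrinks to the finite set $T_0(H_0)\cap I_j$, assumption~\textbf{(II)} implies that for any fixed $M_0$ and $\ep$ small enough, $T_\ep^k(\mathrm{supp}\,\rho_n^{(i,j)})\cap H_{j,\ep}=\emptyset$ for $0\le k\le M_0$, hence $\int_{H_{j,\ep}}\L_{j,\ep}^k\rho_n^{(i,j)}\,dx=0$ on that range, and for those $k$ the $c_n\phi_{j,\ep}$ part contributes only $O(\ep^2)$. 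The tail $k>M_0$ then picks up the extra factor $\theta^{M_0}$, giving a per-jump error $O(M_0\ep^2+\ep\theta^{M_0})$, which after summing over $n\le S/\ep$ yields $O(SM_0\ep+S\theta^{M_0})$ --- vanishing as $\ep\to 0$, $M_0\to\infty$. Without invoking \textbf{(II)} at this step, the argument as written does not close; the paper uses this same hypothesis at the corresponding point, together with the Growth Lemma to convert the geometric no-immediate-return statement into a quantitative conditional-probability bound.
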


The following estimate plays a key role in our analysis. Given a segment $J \subset I$ let
$r_n(x)$ be the distance of $T_\ep^n x$ to the boundary of the component of $T_\ep^n J$ containing it.  Recall that $\leb$ denotes Lebesgue measure on $I = [0,1]$.
\begin{lem}[Growth Lemma]
(see \cite[Section 5.10]{ChernovMarkarian})
\label{LmGr}
There exists $\Lambda > 1$, $c > 0$ such that, for all $\ep$ small enough, and all $J$, and all $n \ge 0$,
$$ \leb(x: r_n(x)\leq\ep)\leq \leb(x: r_0(x)\leq \ep/\Lambda^n)+c \, \leb(J)\ep. $$
\end{lem}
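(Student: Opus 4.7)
The plan is to follow the standard pull-back argument for growth lemmas, tracking how the boundary of $J_n := T_\ep^n J$ is built up iteratively and then dividing the contribution by type. For each $n\ge 0$ I introduce the partition $\xi_n$ of $J$ into the maximal open intervals on which $T_\ep^n$ is smooth and monotone, so that each atom $\omega\in\xi_n$ is mapped diffeomorphically by $T_\ep^n$ onto a connected component of $J_n$. The combinatorial key is $\#\xi_n\le 1+n|\C_\ep|$: in passing from $\xi_{k-1}$ to $\xi_k$ at most $|\C_\ep|$ new cut points are introduced, because each critical point of $T_\ep$ lies in at most one connected component of $J_{k-1}$. Assumption \textbf{(IV)}, combined with the Lasota--Yorke / Rychlik theory cited in the paper via \cite{BaladiYoung,BaladiBook}, provides a uniform distortion constant $D>0$ such that $|\omega|/|T_\ep^n\omega|$ is comparable to $1/(T_\ep^n)'$ on $\omega$, uniformly in $\ep$, $n$, and $\omega$.

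Next, the boundary $\partial J_n$ decomposes into ``old'' boundary (the at most two images $T_\ep^n(\partial J)$) and ``new'' boundary (for each $1\le k\le n$ and each $c^*\in\C_\ep$ lying in the interior of some component of $J_{k-1}$, the two critical values $T_\ep(c^{*\pm})$ propagated forward by $T_\ep^{n-k}$). Pulling back the $\ep$-neighborhood of the old boundary yields a set contained in $\{r_0\le D\ep/\Lambda_0^n\}$, where $\Lambda_0 := \inf|T_\ep'|>1$; choosing $\Lambda\in(1,\Lambda_0)$ slightly smaller than $\Lambda_0$ to absorb $D$ into the exponent makes this a subset of $\{r_0\le\ep/\Lambda^n\}$. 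Pulling back the $\ep$-neighborhood of each ``new'' boundary point through $T_\ep^{n-k}$, then through the local branch of $T_\ep^{-1}$ near $c^*$, and finally through $T_\ep^{k-1}|_\omega$ gives a preimage in $J$ of size $\lesssim D^2\ep/\Lambda_0^n$ per new boundary point; summing over at most $2|\C_\ep|$ cuts per step and over $k=1,\dots,n$ produces a bound $2n|\C_\ep|D^2\ep/\Lambda_0^n$, uniformly $O(\ep)$ because $\sup_n n\Lambda_0^{-n}<\infty$.

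To sharpen the resulting error term $c\ep$ to the stated $c\,\leb(J)\,\ep$, I would redo the argument as an induction on $n$: write $\mu_n(\ep) = \sum_{\omega\in\xi_1}|\{x\in\omega: r_n(x)\le\ep\}|$, push the inductive hypothesis under the diffeomorphism $T_\ep|_\omega$ (which maps $\omega$ onto a component $J_\omega^{(1)}$ of $J_1$), and apply bounded distortion to get $|\{x\in\omega: r_n(x)\le\ep\}| \le D\,(|\omega|/|J_\omega^{(1)}|)\bigl[\leb(\{r_0^{J_\omega^{(1)}}\le \ep/\Lambda^{n-1}\}) + c\,\leb(J_\omega^{(1)})\,\ep\bigr]$. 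The inductive error contributes $Dc\ep\sum_\omega|\omega| = Dc\,\leb(J)\,\ep$, yielding the required $\leb(J)$ factor directly, while the leading term is handled by a ``long atom / short atom'' split that, together with the atom-count bound from Step 1, collapses into $\leb(\{r_0\le\ep/\Lambda^n\})$ (with $\Lambda$ slightly adjusted).

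The principal obstacle I anticipate is keeping $\Lambda$ strictly greater than $1$ after absorbing the distortion constant $D$ and the combinatorial factor $1+|\C_\ep|$ at every step of the induction; a direct application can force $\Lambda=\Lambda_0/(D^2(1+|\C_\ep|))$, which need not exceed $1$. The standard workaround is to first iterate $T_\ep^N$ for a fixed large $N$ so that the exponential expansion $\Lambda_0^N$ dominates these multiplicative losses, and then run the induction on iterates of $T_\ep^N$. Uniformity of all constants in $\ep$ as $\ep\to 0$ follows from the $C^2$-small perturbation hypothesis together with the uniform Lasota--Yorke inequalities guaranteed by assumption \textbf{(IV)}.
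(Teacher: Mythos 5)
The paper does not prove this lemma; it is cited from Chernov--Markarian \cite[Section 5.10]{ChernovMarkarian}, so there is no in-paper argument to compare against. Your pull-back strategy (partition into branches, combinatorial bound on $\#\xi_n$, split of $\partial(T_\ep^n J)$ into ``old'' and ``new'' boundary, distortion-controlled preimages, induction via $T_\ep^N$ for a large fixed $N$ to beat the distortion constant) is indeed the standard route to this type of growth lemma, and you correctly identify the principal technical obstacle (accumulation of the distortion constant $D$ through the induction) and the standard cure.

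The genuine gap is in the induction step where you say the leading term ``collapses into $\leb(\{r_0\le\ep/\Lambda^n\})$'' after a ``long atom / short atom'' split. For the two end atoms of $\xi_1$ the interval $J_\omega^{(1)}=T_\ep\omega$ has \emph{two} boundary points: one coming from $\partial J$ (whose preimage goes into the $\{r_0^J\le\ep/\Lambda^n\}$ term), and one coming from a critical cut whenever $J$ straddles a point of $\C_\ep$. The preimage of the $\ep/\Lambda^{n-1}$-neighborhood of that second endpoint has measure of order $D\,\ep/\Lambda^{n-1}\cdot|\omega|/|J_\omega^{(1)}|$, and since $|J_\omega^{(1)}|$ can be as small as $\Lambda_0|\omega|\sim\leb(J)$, this is of order $\ep$ with no $\leb(J)$ factor. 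Concretely, for the doubling map and $J=[1/2-\delta,1/2+\delta]$ one computes $\leb(\{r_1\le\ep\})=2\ep$ while $\leb(\{r_0\le\ep/\Lambda\})+c\cdot2\delta\cdot\ep<2\ep$ for small $\delta$ and any fixed $\Lambda>1$, $c>0$; so the claimed inequality with the $\leb(J)$ factor cannot hold for segments $J$ that contain a critical point in their interior. The statement (and your induction) only works under the implicit hypothesis that $J$ is a segment of smoothness for $T_\ep$ (equivalently, that $\xi_1=\{J\}$ for $n=1$ so that no ``new'' boundary is created at step one); that is the regime in which the lemma is actually used in the proof of Lemma~\ref{Lm2jumps}, and it is what makes the number $n_0(J)$ of steps before the first cut grow like $\log(1/\leb(J))$, which is the real source of the $\leb(J)$ factor. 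Your proposal neither states this restriction nor derives the $n_0(J)$-mechanism, and without it the long/short-atom bookkeeping does not close.
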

\begin{proof}[Proof of Lemma \ref{Lm2jumps}]
First, $t_0^\ep = 0$, and if $N = N(\sigma, \ep) = \lceil \sigma / \ep \rceil$, then
$$\mu_p (\mathcal{T}_1^\ep \le \kappa / \ep) \le 1 - \mu_p (\mathcal{T}_1^\ep > N) = 1 - \int_L \L_{\lt,\ep}^N(\phi_\lt) \, dx = 1 - \lambda_{p, \ep}^N \int Q_{\lt,\ep}(\phi_\lt) \, dx + O(\theta^N),$$
where we have used Proposition \ref{P:escaperate} (c).  But from parts (a) and (b) of the same proposition, we see that this can be made arbitrarily small for all small $\ep$ by taking $\sigma$ small enough.

Because $\mu_l\ll \leb$ it suffices to prove the statement for $\leb.$

We follow \cite{Dolgopyat}, Section 18. Let
$S_{n, m}=\sum_{j=n+1}^{n+m} 1_{T_\ep^j x\in H_\ep}.$
We have to show that
$$\sum_{n\leq S/\ep} \int 1_{T_\ep^n x\in H_\ep} 1_{S_{n, \sigma/\ep}(x)>0} dx=o(1), \quad
\ep\to 0, \sigma\to 0.$$
Take a small $r.$ We say that a visit of $x$ to the hole at time $n$
is ($r$-)inessential if the length of the smoothness component
of $T^n_\ep \lt\cap H_\ep$
containing $T^n_\ep x$ is less than $r\ep.$ By Lemma \ref{LmGr} the probability that $x$ will have an inessential
visit to the hole before time $S/\ep$ is less than $CrS$ which can be made as small as we wish by taking $r$ small.
Therefore it suffices to show that for any fixed $r$
$$ \sum_{n\leq S/\ep} \int 1_{\mc{E}_{n,\ep}(x)} 1_{S_{n, \sigma/\ep}(x)>0} dx=o(1) $$
where $\mc{E}_{n, \ep}(x)=\{x$ has essential visit to the hole at time $n\}.$
$$\int 1_{\mc{E}_{n,\ep}(x)} 1_{S_{n, \sigma/\ep}(x)>0} dx=
\leb(\mc{E}_{n,\ep}(x)) \Prob(S_{n, \sigma/\ep}(x)>0|\mc{E}_{n,\ep}(x)). $$
Since $\leb(\mc{E}_{n,\ep}(x))\leq \leb(x: T_\ep^n x\in H_\ep)\leq C\ep$ it suffices to check that
\begin{equation}
\label{CondProb}
\max_{n\leq S/\ep} \leb(S_{n, \sigma/\ep}>0|\mc{E}_{n,\ep}(x))=o(1), \quad \ep\to 0, \sigma\to 0 .
\end{equation}
In order to prove this we observe that due to assumption (II) for any fixed $M_0,$ $S_{n, M_0}(x)=0$
for all $x\in \mc{E}_{n,\ep}$
provided that $\ep$ is small enough. On the other hand if $k\geq M_0$ then by Lemma \ref{LmGr}
applied to $H_{\ep}$
$$ \leb(1_{H_\ep} T_\ep^{n+k} x)>0|(\mc{E}_{n,\ep}(x))\leq C\left(\ep+\Lambda^{-k}\right). $$
Summing over $k\in [M_0+1, \kappa/\ep]$ we obtain \eqref{CondProb}.
\end{proof}

\subsection{Convergence of the finite dimensional distributions}\label{S:jumpprocess_convergence}

\begin{proof}[Proof of Theorem \ref{ThDetMC}]
The proof is by induction on $p$. First, for $p=1$ assume that the initial distribution of $x$ is chosen according to
some density $\rho\in BV(\mu_j).$ Then by Proposition \ref{P:escaperate}(c)
$$ \mu_j(\T_1^\ep=n, z(T^n_\ep x)=r)=\int_{H_{jr}} \L_{j, \ep}^n (\rho) dx=
\lam_{j,\ep}^n \int_{H_{jr}} (\cQ_{j, \ep} \rho) dx+O(\theta^n).$$
Recall that
$$ \cQ_{j, \ep} \rho=\nu_{j, \ep}(\rho) \phi_{j, \ep} .$$
By Proposition \ref{P:escaperate}
$\nu_{j, \ep}(\rho)\to \int \rho(x) dx,$
$\int_{H_{jr}} \phi_{j, \ep}/\ep\to \be_{jr}$
and for $\ep n\approx t,$
$\lam_{j, \ep}^n\to e^{-\be_j t}.$
Thus summation over $n\in \Delta/\ep$ concludes the proof for $p=1.$

Next suppose that the statement is known for some $p.$ Denote
$$ \Omega=\{\T_k^\ep\in \Delta_k, z(\ts^\ep_k)=r_k,
\text{ for }k=1, \dots, p\}. $$
To carry the induction step it is enough to prove that
\begin{equation}
\label{Indpp+1}
\mu_j(\ep \T_{p+1}^\ep \in \Delta_{p+1} z(\ts^\ep_{p+1})=r_{p+1} , \Omega)\to
\mu_j(\Omega) \be_{z_p z_{p+1}} \int_{\Delta_{p+1}} e^{-\beta_{z_p} t} dt.
\end{equation}
Set
$$ (\L_\Omega A)(x)=\sum_{T_\ep^{\ts_\ep^p} y=x} \frac{A(y)}{(T_\ep^{\ts_\ep^p})'(y)} $$
where the sum is taken over $y\in \Omega.$ Then
\begin{equation}
\label{LOmega}
\mu_j((\ep \T_{p+1}^\ep=n z(\ts^\ep_{p+1})=r_{p+1} , \Omega)=
\int_{H_{z_p z_{p+1}}} \L_{j,\ep}^n (\L_\Omega (1_{I_j} \phi_j)) dx.
\end{equation}
Take a small $\sigma$ and rewrite
$$\L_{j,\ep}^n (\L_\Omega (1_{I_j} \phi_j))=\L_{j,\ep}^{n-\sigma/\ep}
\left[\L_{j,\ep}^{\sigma/\ep} (\L_\Omega (1_{I_j} \phi_j))\right] .$$
Due to Lasota-Yorke inequality
$$ \left[\L_{j,\ep}^{\sigma/\ep} (\L_\Omega (1_{I_j} \phi_j))\right] $$
has bounded \bv-norm. Hence arguing as in the $p=1$ case we see that \eqref{LOmega}
is asymptotic to
$$ \be_{z_p z_{p+1}} \int_{\Delta_{p+1}} e^{-\beta_{z_p} t} dt
\int_I \left[\L_{j,\ep}^{\sigma/\ep} (\L_\Omega (1_{I_j} \phi_j))\right] dx (1+o_{\sigma\to 0} (1)). $$
Due to Lemma \ref{Lm2jumps} the last integral here equals to
$$\mu_j(\Omega, \T_{p+1}^\ep>\sigma/\ep)=\mu_j(\Omega)+o_{\sigma\to 0} (1) . $$
Since $\sigma$ is arbitrary this proves \eqref{Indpp+1} completing the proof of Theorem \ref{ThDetMC}.
\end{proof}

\section{Proof of Theorem \ref{thmmain}} \label{S:ProofMain}
In this section we deduce Theorem \ref{thmmain} from Theorem \ref{ThDetMC} and Proposition \ref{P:secondEvalue}.
\begin{proof}
Let $\brA=A-\mu_\ep(A).$
By Proposition \ref{P:secondEvalue} for each $\delta$ we can find $S$  such that
$$ \left|\sum_{n\geq S/\ep} \mu_\ep(\brA \brA\circ T_\ep^n)\right|<\delta $$
so it suffices to get the asymptotics of $\mu_\ep(\brA \brA\circ T_\ep^n)$ for $n\approx t/\ep$ where
$t\leq S.$ We need to estimate $D_n=\mu_\ep(\brA \brA\circ T_\ep^n).$ Take a natural number $n_0.$
We have
$$ D_n=\int \brA(x) \phi_\ep(x) \brA(T_\ep^n x) dx %\\
        =\int \brA(T_\ep^{n_0} y) \left[\L_\ep^{n-n_0} (\L_\ep^{n_0} (\brA\phi_\ep)\right](y) dy . $$
Since $L_\ep$ depends continuously on $\ep$ as a map $\bv\to L^1$ we can rewrite the last expression~as
$$ \int \brA(T_\ep^{n_0} y) \left[\L_\ep^{n-n_0} (P (\brA\phi_\ep))\right](y) dy +O(\theta^{n_0})+o_{\ep\to 0}(1). $$
Since $T_0$ is mixing we have
$$ \int \brA(T^{n_0}_\ep y) B(y) dy=\sum_k \left( \int_{I_k} \brA(y) \phi_k dy \int_{I_k} B(y) dy\right)+
\left[O(\theta^{n_0})
+o_{\ep\to 0}(1)\right]||A||_\bv ||B||_\bv. $$
As $\ep\to 0$ the first factor converges to $\bA(k)-\sum_l p_l \bA(l)$
while the second term equals
\begin{align*}
 \int_{I_k} \L_\ep^{n-2n_0} P(\brA \phi_ep)&=
\sum_j 1_{I_k} p_j \left(\bA(j)-\sum_l p_l \bA(l)\right)  \L_\ep^{n-2n_0} \phi_j dy+o_{\ep\to 0}(1)\\&=
\sum_j p_j \left(\bA(j)-\sum_l p_l \bA(l)\right) \mu_j(T^{n-2n_0}_\ep x\in I_k) .
\end{align*}
Since $n_0$ was arbitrary we conclude that
$$ \lim_{\ep\to 0} \ep \sum_{n\leq S/\ep} \mu_\ep(\brA \brA\circ T_\ep^n)=\sum_{jk}
\int_0^S \left[p_j \bA(j) \bA(k) p_{jk}(t) -\left(\sum_j p_j \bA(j)\right)^2\right] dt. $$
Since $S$ is arbitrary we can let $S\to\infty$ and obtain Theorem \ref{thmmain}.
\end{proof}

\section{Proof of Proposition \ref{P:secondEvalue}}\label{S:ProofSecond}
\begin{proof}
Recall that $\L_\ep$ satisfy uniform Lasota-Yorke Inequality, that is there is a constant $K$ such that
\begin{align}
\label{LYV} \var{\L_\ep^n A}& \leq K\left[||A||_{L^1}+\theta^n \var{A}\right] \\
\label{LYL} ||\L_\ep^n A||_{L^1} & \leq K ||A||_{L^1}
\end{align}
Consequently it suffices to show that if $A\in\bv_0$  then
\begin{equation}
\label{L1BV}
||\L_\ep^{\kappa/2\ep} A ||_{L^1}\leq (4K)^{-1} ||A||_\bv
\end{equation}
since combining \eqref{LYV}, \eqref{LYL} and \eqref{L1BV} gives
$$ ||\L_\ep^{\kappa/\ep} A ||_\bv \leq 2^{-1} ||A||_\bv. $$
Next mixing of $T_0$ on $I_k$ implies that if $\int_{I_k} A(x) dx=0$ for each $k$ then
$$ || \L_0^{n_0} A ||_{L^1}\leq C \theta^{n_0} ||A||_\bv .$$
Consequently given $\delta$ we can find $n_0, \ep_0$ and $\delta_1$ such that if
$\ep\leq \ep_0$ and $|\int_{I_k} A(x)dx|\leq \delta_1$ for each $k$ then
$$ ||\L_\ep^{n_0}||_{L^1}\leq \delta. $$
Let $\brn=\frac{\kappa}{2\ep}-n_0.$ By the foregoing discussion it remains to show that for each $k$
$$ \left| \int_{I_k} (\L_\ep^\brn A)(x) dx\right|\leq \delta_1 $$
provided that $\kappa$ is large enough. Since
$$\L_\ep^\brn A=\L_\ep^{\brn-n_1}\L_\ep^{n_1} A=\L_\ep^{\brn-n_1} (PA)+O(\theta^{n_1})+o_{\ep\to 0}(1)$$
we need to show that $\int_{I_k} (\L_\ep^{\brn-n_1} P A)(x) dx$ is small. By Theorem \ref{ThDetMC} this integral
is asymptotic to
$ \sum_j \int_{I_j} A(y) dy p_{jk}(\kappa) .$ As $\kappa\to\infty$ this expression converges to
$p_k\int_I A(y) dy=0$ so the result follows.
\end{proof}

\appendix
\section{Proof of Proposition \ref{P:escaperate}}
\label{S:appendix}

\begin{proof}
Part (a) is proven using arguments similar to the ones in \cite[Section 3.2]{KellerLiverani09}.
Namely we apply \cite[Theorem 2.1]{KellerLiverani09}
with $\cP_0=\L_0,$ $\cP_\ep=\L_{j,\ep},$ and the Banach space
$ \bv(I_j).$ This theorem says that
$$\lim_{\ep\to 0} \frac{1-\lam_{j,\ep}}{\Delta_\ep}=1-\sum_{k \ge 0} q_k$$
with
$$\Delta_\ep=\mu_j(H_{j,\ep})=\be_j \ep+o(\ep), $$
$$q_k=\lim_{\ep\to 0} \frac{\mu_j(T_0^{-1} T_\ep^{-k} H_{j, \ep})-
\mu_j(T_\ep^{-(k+1)} H_{j, \ep})}{\Delta_\ep}. $$
Now for fixed $k$ and all small $\ep$ the set $T^{-k}_\ep H_{j, \ep}$ consists of a finite number of intervals of
size $O(\ep)$ so the sizes of their preimages by $T_0$ and $T_\ep$ differ by $O(\ep^2)$
(here we are using assumption \textbf{(II)}, which implies that $\phi_0$ is continuous at points in
$T_0^{-(k+1)}H_\ep$).

This completes the proof of part (a). Parts (b) and (c) follow from  \cite{KellerLiverani99} except for
\eqref{DensityHoles} which is proven in \cite{GonzalezTokmanHuntWright}.
\end{proof}

%\subsubsection*{Acknowledgments}

\end{document}